\newtheorem{theorem}{Theorem}[section]
\newtheorem{proposition}[theorem]{Proposition}
\newtheorem{example}[theorem]{Example}
\newtheorem{remark}[theorem]{Remark}
\newcommand{\cl}[1]{\mathcal{#1}}
\newcommand{\bb}[1]{\mathbb{#1}}
\newcommand\loc{\mathop{\rm loc}}
\newcommand\qq{\mathop{\rm q}}
\newcommand\aq{\mathop{\rm aq}}
\newcommand\qc{\mathop{\rm qc}}
\newcommand\xx{\mathop{\rm x}}
\begin{document}

\title{Products of synchronous games}


\author[]{L. Man\v{c}inska}
\address{QMATH, Department of Mathematical Sciences, University of Copenhagen}
\email{laura.mancinska@gmail.com}

\author[]{V. I. Paulsen}
\address{Institute for Quantum Computing and Department of Pure Mathematics, University of Waterloo, Waterloo, Canada}
\email{vpaulsen@uwaterloo.ca}

\author[]{I. G. Todorov}
\address{School of Mathematical Sciences, 
University of Delaware, 501 Ewing Hall,
Newark, DE 19716, USA}
\email{todorov@udel.edu}

\author[]{A. Winter}
\address{Grup d'Informaci\'o Qu\`antica, 
Departament de F\'isica,
Universitat Aut\`onoma de Barcelona,
08193 Bellaterra (Barcelona), Spain}
\email{andreas.winter@uab.cat}

\date{23 September 2021}
\thanks{2010 {\it  Mathematics Subject Classification.} 81P45, 46L89, 91A12}

\begin{abstract} 
We show that the *-algebra of the product of two synchronous games is the tensor product of the corresponding *-algebras.
We prove that the product game has a perfect C*-strategy if and only if each of the 
individual games does, and that in this case the C*-algebra of the product game is 
*-isomorphic to the maximal C*-tensor product of the individual C*-algebras.
We provide examples of synchronous games whose synchronous values are 
strictly supermultiplicative.
\end{abstract}

\maketitle


\section{Introduction}\label{s_intro}

Non-local games have been extensively studied in Mathematics, Quantum Physics and Computer Science.
Among their numerous applications is 
the explicit demonstration of the presence of entanglement. 
More specifically, a probabilistic strategy for such a game is called {\it perfect}, 
if the probability that the players return a losing outcome is zero.
Many games, which can be shown to have no perfect classical probabilistic strategies, 
have been demonstrated to nevertheless 
possess perfect entanglement-assisted strategies.

A prominent family of non-local games is that of
{\it synchronous games}, introduced in \cite{psstw}.
Synchronous games play an important role in the recent (negative) resolution of 
Connes' Embedding Problem and an equivalent conjecture of Tsirelson \cite{jnvwy}. 
In particular, the authors of \cite{jnvwy} construct a synchronous game that has a perfect strategy 
in one mathematical model for entanglement description, 
but no perfect strategy in an alternate model, 
thus demonstrating that these two entanglement models are genuinely distinct.
We note that an important reformulation of the two problems was obtained by Kirchberg  
(see the monograph \cite{Pi2}); in fact, Kirchebrg's approach underlies many of the connections 
bridging operator algebras and non-local game theory (see \cite{heilbronn}). 

One of the reasons 
that synchronous games play a significant 
role in the resolution of the aforementioned problems, as well as more generally in 
quantum information theory, is that each 
synchronous game has a *-algebra associated with it
and the representation theory of this algebra captures the existence 
of perfect strategies for each of the various models used to describe entanglement \cite{hmps}.


Central concepts in non-local game theory are the various notions of 
a {\it value} of a game. Given a probabilistic strategy for a 
non-local game and a prior probability distribution on its inputs, 
one may compute the expected probability of winning. 
The different notions of a value of the game arise by considering the optimal expected 
winning probability over different, fixed, sets of strategies. 
Thus, non-local games have 
classical values as well as values defined using each of the mathematical models for entanglement. 
For synchronous games it is natural to also study these optimal winning probabilities
over various families of \emph{synchronous strategies}. This is the topic of the paper \cite{hmnpr}.  

One prominent question regarding non-local games is their behaviour under parallel repetition and, more generally, under taking products. We will show that the *-algebra affiliated with a product of synchronous games is the tensor product of the corresponding *-algebras. Thus, the existence/non-existence of perfect strategies is related to the behaviour of 
the various types of representations, after the tensor products of the individual *-algebras is taken. 
We show, in particular, that the product game has a perfect C*-strategy \cite{hmps} if and only if each of the individual game does so and that, in this case, 
the C*-algebra of the product game is canonically C*-isomorphic to the maximal C*-tensor product of the two individual C*-algebras. 

The behaviour of the classical value of the game under parallel repetition is relatively well understood \cite{raz} and there are simple examples of games known, 
for which the classical value is strictly supermultiplicative. The parallel repetition question has turned out 
to be much harder to understand in the case of quantum values, and the general question of whether the 
quantum value decays exponentially with the number of repetitions is still open.

 In \cite{hmnpr} the authors consider the parallel repetition question of synchronous values and show that they
 can be strictly supermultiplicative.  However, their example concerns the synchronous value of a non-synchronous game. We provide two examples of synchronous games whose synchronous values are strictly supermultiplicative, showing that this phenomenon can occur for the local, the quantum and the quantum commuting values alike.

\medskip

\noindent {\bf Acknowledgements. } 
The authors would like to thank Matthew Kennedy and 
Henry Yuen for useful discussions on the topic of this paper. 
L.M. was supported by Villum Fonden via the QMATH Centre of Excellence (Grant No. 10059) and Villum Young Investigator grant (No. 37532). V.I.P. was supported by NSERC grant 03784.
I.G.T. was supported by
NSF Grant 2115071. 
A.W. was supported by the Spanish MINECO (projects FIS2016-86681-P and PID2019-107609GB-I00/AEI/10.13039/501100011033), both with the support of FEDER funds, and by the Generalitat de Catalunya (project 2017-SGR-1127).
All the authors wish to thank the American Institute of Mathematics where this research originated.


\section{Preliminaries}\label{s_prel}

Let $X$, $Y$, $A$ and $B$ be finite sets, and recall that 
a POVM on a Hilbert space $H$ is a family $(E_i)_{i=1}^k$ of positive operators such that $\sum_{i=1}^k E_i = I$. 
A \emph{correlation} is a family $p = \{(p(a,b|x,y))_{a\in A, b\in B} : x\in X, y\in Y\}$, where 
$(p(a,b|x,y))_{a\in A, b\in B}$ is a probability distribution for every $(x,y)\in X\times Y$. 
A correlation $p$ is called
\begin{itemize}
\item[(i)] 
\emph{local} if it is the convex combination of 
correlations of the form $\{(p_1(a|x) p_2(b|y))_{a\in A,b\in B} : x\in X, y\in Y\}$ (notation: $\cl C_{\rm loc}$);
\item[(ii)] 
\emph{quantum} if there exist a finite dimensional Hilbert space $H$ (resp. $K$), a unit vector $\xi\in H\otimes K$ and 
POVM's $(E_{x,a})_{a\in A}$, $x\in X$ on $H$ (resp. $(F_{y,b})_{b\in B}$, $y\in Y$ on $K$), such that
$$p(a,b|x,y) = \langle (E_{x,a}\otimes F_{y,b})\xi,\xi\rangle$$ 
for all $x\in X, y\in Y, a\in A, b\in B$ (notation: $\cl C_{\rm q}$);
\item[(iii)] 
\emph{approximately quantum} if it belongs to the closure 
$\overline{\cl C_{\rm q}}$ of the set $\cl C_{\rm q}$ (notation: $\cl C_{\rm qa}$);
\item[(iv)] 
\emph{quantum commuting} if there exist a Hilbert space $H$, a unit vector $\xi\in H$ and
POVM's $(E_{x,a})_{a\in A}$, $x\in X$ (resp. $(F_{y,b})_{b\in B}$, $y\in Y$) on $H$, such that
$E_{x,a} F_{y,b} = F_{y,b}E_{x,a}$ for all $x,y,a,b$, and 
$$p(a,b|x,y) = \langle E_{x,a} F_{y,b}\xi,\xi\rangle, \ \ \ x\in X, y\in Y, a\in A, b\in B$$
for all $x\in X, y\in Y, a\in A, b\in B$ (notation: $\cl C_{\rm qc}$).
\end{itemize}
We note that a correlation of any of the types just defined is \emph{non-signalling} in the sense that 
\begin{equation}\label{eq_yy'}
\sum_{b\in B} p(a,b|x,y) = \sum_{b\in B} p(a,b|x,y'), \ \ x\in X, y,y'\in Y, a\in A,
\end{equation}
and 
\begin{equation}\label{eq_xx'}
\sum_{a\in A} p(a,b|x,y) = \sum_{a\in A} p(a,b|x',y), \ \ x,x'\in X,  y\in Y, b\in B.
\end{equation}
We denote the (convex) set of all non-signalling correlations by $\cl C_{\rm ns}$, and note the inclusions
$$\cl C_{\rm loc}\subseteq \cl C_{\rm q}\subseteq \cl C_{\rm qa}\subseteq \cl C_{\rm qc}\subseteq \cl C_{\rm ns}.$$

A \emph{non-local game}
is a tuple $\bb{G} = (X,Y,A,B,\lambda)$, where $X,Y,A,B$ are finite sets and 
$\lambda : X\times Y\times A\times B\to \{0,1\}$ is a map, called 
the \emph{rule function} of the game.
We interpret $X$ (resp. $Y$) as the set of questions posed by a Verifier to a player Alice (resp. Bob), and 
$A$ (resp. $B$) as the set of her (resp. his) possible answers. 
Alice and Bob play cooperatively against the Verifier; if a pair $(a,b)\in A\times B$ of answers is received given 
the pair $(x,y)\in X\times Y$ of questions, 
the players win (resp. lose) the round of the game if 
$\lambda(x,y,a,b) =1$ (resp. $\lambda(x,y,a,b) = 0$).

A \emph{probabilistic strategy} for the game $\bb{G}$ is a 
family $p = \{(p(a,b|x,y))_{a\in A,b\in B}$ $: x\in X, y\in Y\}$ of probability distributions; 
$p(a,b|x,y)$ is interpreted as the probability that the players respond with the pair $(a,b)$ of answers 
when they are asked the pair $(x,y)$ of questions. 
A probabilistic strategy $p$ for $\bb{G}$ is called \emph{non-signalling} if $p$ is a non-signalling correlation;
such a strategy being used by the players expresses the fact that they do not communicate
after the start of the game.
A non-signalling strategy $p$ for $\bb{G}$ is called \emph{perfect} if 
$$\lambda(x,y,a,b) = 0 \ \Longrightarrow \ p(a,b|x,y) = 0, \ \ \ x\in X, y\in Y, a\in A, b\in B.$$
For ${\rm x}\in \{{\rm loc}, {\rm q}, {\rm qa}, {\rm qc}, {\rm ns}\}$, let 
$\cl C_{\rm x}(\lambda)$ be the set of all elements of 
$\cl C_{\rm x}$ that are perfect strategies for the game $\bb{G}$ with rule function $\lambda$.

The game $\bb{G}$ is called \emph{synchronous} if $X = Y$, $A = B$, and 
$\lambda(x,x,a,b) = 0$ if $a\neq b$; in this case we write $\bb{G} = (X,A,\lambda)$.
The \emph{synchronicity game} has rule function 
$\lambda : X\times X\times A\times A\to \{0,1\}$ given by $\lambda(x,y,a,b) = 1-(1-\delta_{a,b})\delta_{x,y}$. 
For synchronous games, there are two further types of winning strategies which will be of interest to us. 
Namely, a synchronous game $\bb{G} = (X,A,\lambda)$ is said to have a 
\emph{perfect algebraic strategy} (see \cite{hmps}) 
if there exists a unital complex *-algebra
with a generating set 
$\{e_{x,a} : x\in X, a\in A\}$ satisfying the following relations:
$$e_{x,a} = e_{x,a}^* = e_{x,a}^2, \ \ x\in X, a\in A;$$
\begin{equation}\label{eq_cond}
\sum_{a\in A} e_{x,a} = 1, \ \ x\in X, \mbox{ and }
\end{equation}
$$\lambda(x,y,a,b) = 0 \Longrightarrow e_{x,a}e_{y,b} = 0.$$
If $\bb{G}$ has a perfect algebraic strategy then 
there exists a universal 
*-algebra $\cl A(\bb{G})$ satisfying the conditions (\ref{eq_cond})
(in the sense that any unital *-algebra satisfying (\ref{eq_cond}) is a canonical quotient of $\cl A(\bb{G})$)
and that $\cl A(\bb{G})$ is unique up to a unital *-isomorphism. 
We will reserve the notation $e_{x,a}$, $x\in X$, $a\in A$, for 
a family of generators of $\cl A(\bb{G})$ satisfying (\ref{eq_cond}). 

The game $\bb{G}$ is said to have a \emph{perfect} \emph{C*-strategy} if 
there exists a (universal) unital C*-algebra $C^*(\bb{G})$ with generating set  $\{e_{x,a} : x\in X, a\in A\}$ satisfying the conditions (\ref{eq_cond}). 
It is clear that if $\bb{G}$ has a perfect C*-strategy then it has a perfect algebraic strategy. 
The following observation is also clear.

\begin{remark}\label{r_harder}
{\rm 
For $i\in\{1,2\}$, let $\bb{G}_i$ be a synchronous game with rule function 
$\lambda_i : X_i \times X_i \times A_i\times A_i \to \{0,1\}$, $i = 1,2$. 
Suppose that $\lambda_1\leq \lambda_2$. 
If $\bb{G}_1$ has a perfect algebraic strategy (resp.~perfect $C^*$-strategy) then 
$\bb{G}_2$ has a perfect algebraic strategy (resp.~perfect $C^*$-strategy).}
\end{remark}

The synchronicity game with a question set $X$ and an answer set $A$ has a perfect C*-strategy, 
and its C*-algebra, which will be denoted by $\cl A_{X,A}$, is *-isomorphic, via Fourier transform, to the group C*-algebra 
$C^*(\bb{F}_{X,A})$, where 
$$\bb{F}\mbox{}_{X,A} = \underbrace{\bb{Z}\mbox{}_{|A|}\ast\cdots\ast \bb{Z}\mbox{}_{|A|}}_{|X| \ \mbox{times}}.$$
The following facts were established in \cite{heilbronn, kps, psstw}.

\begin{theorem}\label{th_cha}
Let $p\in \cl C_{\rm ns}$ be a synchronous correlation over the pair $(X,A)$ of finite sets.
\begin{itemize}
\item[(i)]
$p\in \cl C_{\rm qc}$ if and only if there exists a trace $\tau : \cl A_{X,A} \to \bb{C}$ such that 
\begin{equation}\label{eq_tra}
p(x,y,a,b) = \tau(e_{x,a}e_{y,b}), \ \ \ x,y\in X, a,b\in A;
\end{equation}

\item[(ii)]
$p\in \cl C_{\rm qa}$ if and only if there exists an amenable trace 
$\tau : \cl A_{X,A}\to \bb{C}$ such that 
(\ref{eq_tra}) is satisfied;

\item[(iii)]
$p\in \cl C_{\rm q}$ if and only if there exist a finite dimensional Hilbert space $H$, 
a *-representation $\pi : \cl A_{X,A} \to \cl B(H)$ and a trace $\tau_{\rm fin} : \cl B(H) \to \bb{C}$ 
such that 
\begin{equation}\label{eq_tra1}
p(x,y,a,b) = (\tau_{\rm fin}\circ \pi)(e_{x,a}e_{y,b}), \ \ \ x,y\in X, a,b\in A;
\end{equation}

\item[(iv)]
$p\in \cl C_{\rm loc}$ 
if and only if there exists a finite dimensional Hilbert space $H$, 
a *-representation $\pi : \cl A_{X,A} \to \cl B(H)$ with an abelian image, and a trace $\tau_{\rm fin} : \cl B(H) \to \bb{C}$ 
such that (\ref{eq_tra1}) is satisfied.
\end{itemize}
\end{theorem}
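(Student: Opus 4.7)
The plan is to treat Theorem~\ref{th_cha} as four parallel equivalences sharing a common framework. One direction produces a correlation from a trace via the GNS construction, and the other produces a trace from a synchronous correlation via a vector state after ``synchronizing'' the two players' measurement operators. I would prove (i) first in full and then specialize to (ii)--(iv) by restricting the class of traces under consideration.

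For the trace-to-correlation direction, given a trace $\tau$ on $\cl A_{X,A}$, I would apply the GNS construction to obtain a *-representation $\pi\colon\cl A_{X,A}\to\cl B(H_\tau)$ with cyclic trace vector $\xi_\tau$. Using the modular conjugation $J$ (which, for a tracial state, is the anti-unitary extension of $\pi(u)\xi_\tau\mapsto \pi(u^*)\xi_\tau$), I set $F_{y,b}:=J\pi(e_{y,b})J$; these projections commute with every $\pi(e_{x,a})$, sum to $I$ in $b$, and satisfy $F_{y,b}\xi_\tau=\pi(e_{y,b})\xi_\tau$, yielding
$$p(a,b|x,y) = \langle \pi(e_{x,a}) F_{y,b}\xi_\tau,\xi_\tau\rangle = \tau(e_{x,a}e_{y,b}),$$
which implements $p$ as a quantum commuting correlation. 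For (ii), an amenable trace is a weak* limit of traces pulled back from matrix algebras, so the correlation falls in the closure $\cl C_{\rm qa}$; for (iii), when $\tau = \tau_{\rm fin}\circ\pi$ with $\dim H<\infty$, a Schmidt decomposition of the density operator implementing $\tau_{\rm fin}$ rewrites the associated vector state as a bipartite quantum correlation; for (iv), an abelian image of $\pi$ decomposes into characters, each contributing a product deterministic strategy whose convex combination is local.

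For the correlation-to-trace direction, the crux---and the main obstacle---is the synchronization lemma of \cite{psstw}: any quantum-commuting realization of a synchronous $p$ by commuting POVMs $(E_{x,a}),(F_{y,b})$ and unit vector $\xi$ can be arranged so that each $E_{x,a}$ is a projection, $F_{x,a}\xi = E_{x,a}\xi$, and $\xi$ is a trace vector for the von Neumann algebra generated by $\{E_{x,a}\}$. Its proof starts from the identity $1 = \sum_a\langle E_{x,a}F_{x,a}\xi,\xi\rangle$, which holds because only the diagonal terms survive by synchronicity; positivity and Cauchy--Schwarz then force $\|(E_{x,a}-F_{x,a})\xi\|^2=0$ as well as $E_{x,a}^2\xi = E_{x,a}\xi$, and these relations propagate from $\xi$ to its orbit under the two commuting algebras. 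With the lemma in place, the assignment $e_{x,a}\mapsto E_{x,a}$ extends by universality to a *-representation $\pi\colon \cl A_{X,A}\to\cl B(H)$, and $\tau(u) := \langle\pi(u)\xi,\xi\rangle$ is a trace satisfying (\ref{eq_tra}). The refinements (iii) and (iv) follow by tracking the extra structure of the original realization---finite-dimensional $H\otimes K$ giving a finite-dimensional target for (iii), and an abelian image for (iv)---while (ii) is obtained by taking limits over $\cl C_{\rm q}$ and observing that the resulting limit trace is amenable by Connes' characterization.
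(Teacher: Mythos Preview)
The paper does not prove Theorem~\ref{th_cha}; it records the statement and attributes all four parts to the references \cite{heilbronn, kps, psstw}. There is therefore no in-paper argument to compare against, and your task effectively reduces to reproducing the proofs in those sources.

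Your outline follows the established route of those references closely. The trace-to-correlation direction via GNS and the modular conjugation $J$ is exactly the construction in \cite{psstw}, and your synchronization lemma for the converse (forcing $E_{x,a}\xi=F_{x,a}\xi$ and $E_{x,a}^2\xi=E_{x,a}\xi$ from the diagonal identity $\sum_a\langle E_{x,a}F_{x,a}\xi,\xi\rangle=1$) is precisely the argument given there for (i). Your treatment of (ii) via Connes' characterization of amenable traces as weak*-limits of matricial traces is the approach of \cite{kps}; the only point to make explicit is that when you pass to a limit of synchronous $q$-correlations you must first extract a convergent sequence of traces (compactness of the state space) and then verify amenability of the limit, which is where Connes' criterion enters. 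Parts (iii) and (iv) are straightforward specializations, as you say, once (i) is in hand; for (iii) note that the POVMs in a $q$-realization can be dilated to projections (Naimark) while staying finite dimensional, so that the universal map $\cl A_{X,A}\to\cl B(H)$ is available.

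In short: your proposal is correct, and it coincides with the standard proofs the paper is citing rather than with anything the paper itself supplies.
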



\section{Algebraic strategies of product games}

Let $\bb{G}_1 = (X_1,A_1,\lambda_1)$ and $\bb{G}_2 = (X_2,A_2,\lambda_2)$
be synchronous games. 
Their \emph{product} $\bb{G}_1\times \bb{G}_2$ is the game 
with pair of input sets $(X_1\times X_2, X_1\times X_2)$, 
pair of output sets $(A_1\times A_2,A_1\times A_2)$, and a rule 
function
$$\lambda_1\times\lambda_2 : X_1\times X_2 \times X_1\times X_2\times A_1\times A_2 \times A_1\times A_2 \to \{0,1\}$$
given by 
$$
(\hspace{-0.02cm}\lambda_1\hspace{-0.02cm}\times\hspace{-0.02cm}\lambda_2\hspace{-0.02cm})(\hspace{-0.02cm}
(\hspace{-0.02cm}x_1,x_2\hspace{-0.02cm}),
\hspace{-0.02cm}(\hspace{-0.02cm}y_1,y_2\hspace{-0.02cm}),
\hspace{-0.02cm}(\hspace{-0.02cm}a_1,a_2\hspace{-0.02cm}),
\hspace{-0.02cm}(\hspace{-0.02cm}b_1,b_2\hspace{-0.02cm})\hspace{-0.02cm}) 
\hspace{-0.02cm} = \hspace{-0.02cm}
\lambda_1(\hspace{-0.02cm}x_1\hspace{-0.02cm},\hspace{-0.02cm}y_1\hspace{-0.02cm},\hspace{-0.02cm}a_1\hspace{-0.02cm},\hspace{-0.02cm}b_1\hspace{-0.02cm}) 
\lambda_2(\hspace{-0.02cm}x_2\hspace{-0.02cm},\hspace{-0.02cm}y_2\hspace{-0.02cm},\hspace{-0.02cm}a_2\hspace{-0.02cm},\hspace{-0.02cm}b_2\hspace{-0.02cm})\hspace{-0.04cm}.$$

\medskip

\noindent {\bf Remarks. (i) }
If $\bb{G}_1$ and $\bb{G}_2$ are synchronous then so is $\bb{G}_1\times \bb{G}_2$; we thus write 
$\bb{G}_1\times \bb{G}_2 = (X_1\times X_2,A_1\times A_2,\lambda_1\times\lambda_2)$. 
A similar product can be defined for any (not necessarily synchronous) pair of non-local games, 
but in the sequel we will only be interested in the synchronous case.

\smallskip

{\bf (ii) } 
The product game $\bb{G}_1\times \bb{G}_2$ can be thought of as a game where Alice and Bob play the games $\bb{G}_1$ and $\bb{G}_2$ in parallel, receiving a question for each of them. 
In order to win the combined game, it is necessary and sufficient to win 
both $\bb{G}_1$ and $\bb{G}_2$.
Thus, $\bb{G}_1\times \bb{G}_2$ can be thought of as a conjunctive product of $\bb{G}_1$ and $\bb{G}_2$.
We note that there are also other natural ways to combine the games $\bb{G}_1$ and $\bb{G}_2$ (notably, 
by forming a disjunctive product), but they will not be considered in this paper.

We will denote by $\cl A_1\otimes\cl A_2$ the algebraic tensor product of *-algebras $\cl A_i$, $i = 1,2$. Recall that $\cl A_1\otimes\cl A_2$ is a *-algebra with an underlying vector space is the algebraic tensor product of the underlying spaces, multiplication given by 
$$(u_1\otimes u_2)(v_1\otimes v_2) := (u_1 v_1)\otimes (u_2 v_2),$$
and involution given by 
$$(u_1\otimes u_2)^* = u_1^* \otimes u_2^*.$$
If $\cl A_1$ and $\cl A_2$ are moreover C*-algebras, we write as customary $\cl A_1\otimes_{\max}\cl A_2$ for their \emph{maximal C*-tensor product}; thus, $\cl A_1\otimes_{\max}\cl A_2$ is the unique C*-algebra containing 
$\cl A_1$ and $\cl A_2$ as unital C*-subalgebras, generated by $\cl A_1$ and $\cl A_2$ as a C*-algebra, 
and satisfying the following universal property: 
if $H$ is a Hilbert space and $\pi_i : \cl A_i\to \cl B(H)$ is a *-representation, $i = 1,2$, such that $\pi_1(u_1)\pi_2(u_2) = \pi_2(u_2)\pi_1(u_1)$ for all $u_1\in \cl A_1$, $u_2\in \cl A_2$, then there exists a unique *-representation $\pi_1\otimes_{\max} \pi_2 : \cl A_1\otimes_{\max} \cl A_2\to \cl B(H)$ such that $(\pi_1\otimes_{\max} \pi_2)(u_i) = \pi_i(u_i)$, if $u_i\in \cl A_i$, $i = 1,2$.

\begin{theorem}\label{th_proiff}
Let $\bb{G}_1$ and $\bb{G}_2$ be synchronous games. 
\begin{itemize}
\item[(i)] The games $\bb{G}_1$ and $\bb{G}_2$ have perfect algebraic strategies if and only if 
$\bb{G}_1\times \bb{G}_2$ does.
In this case, $\cl A(\bb{G}_1\times \bb{G}_2) \cong \cl A(\bb{G}_1) \otimes \cl A(\bb{G}_2)$.

\item[(ii)] The games $\bb{G}_1$ and $\bb{G}_2$ have perfect C*-strategies if and only if $\bb{G}_1\times \bb{G}_2$ does. 
In this case, $C^*(\bb{G}_1\times \bb{G}_2) \cong C^*(\bb{G}_1)\otimes_{\max} C^*(\bb{G}_2)$.
\end{itemize}
\end{theorem}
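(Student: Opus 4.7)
The plan is to construct mutually inverse *-isomorphisms between $\cl A(\bb G_1\times\bb G_2)$ and $\cl A(\bb G_1)\otimes\cl A(\bb G_2)$ via universal properties; this will simultaneously establish the equivalences of existence in (i). Part (ii) will then follow by running the same argument in the C*-category, leveraging the universal property of $\otimes_{\max}$ to glue commuting *-homomorphisms.

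For the ``if'' direction of (i), observe that the elements $e^{(1)}_{x_1,a_1}\otimes e^{(2)}_{x_2,a_2}$ in $\cl A(\bb G_1)\otimes\cl A(\bb G_2)$ are self-adjoint idempotents; for fixed $(x_1,x_2)$ they sum to $1\otimes 1$; and whenever $\lambda_1(x_1,y_1,a_1,b_1)\lambda_2(x_2,y_2,a_2,b_2)=0$, one of the two tensor factors vanishes. The universal property of $\cl A(\bb G_1\times\bb G_2)$ then produces a *-homomorphism
\[
\Phi:\cl A(\bb G_1\times\bb G_2)\to\cl A(\bb G_1)\otimes\cl A(\bb G_2),\quad f_{(x_1,x_2),(a_1,a_2)}\mapsto e^{(1)}_{x_1,a_1}\otimes e^{(2)}_{x_2,a_2},
\]
where $f_{(x_1,x_2),(a_1,a_2)}$ denote the canonical generators of $\cl A(\bb G_1\times\bb G_2)$.

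For the converse direction and for constructing the inverse of $\Phi$, I introduce the marginals
\[
E_{x_1,a_1}(x_2):=\sum_{a_2\in A_2}f_{(x_1,x_2),(a_1,a_2)}
\]
in $\cl A(\bb G_1\times\bb G_2)$, and analogously $E_{x_2,a_2}(x_1)$. The key technical step is to show that $E_{x_1,a_1}(x_2)$ is independent of the choice of $x_2\in X_2$: expanding $E_{x_1,a_1}(x_2)\cdot 1$ via the resolution $1=\sum_{b_1,b_2}f_{(x_1,x_2'),(b_1,b_2)}$ and using that synchronicity of $\lambda_1$ makes $\lambda_1(x_1,x_1,a_1,b_1)\lambda_2(x_2,x_2',a_2,b_2)=0$ whenever $b_1\neq a_1$, every such cross-term vanishes, yielding $E_{x_1,a_1}(x_2)=\sum_{a_2,b_2}f_{(x_1,x_2),(a_1,a_2)}f_{(x_1,x_2'),(a_1,b_2)}$; the symmetric expansion of $1\cdot E_{x_1,a_1}(x_2')$ gives the identical expression. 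Let $\widetilde E^{(1)}_{x_1,a_1}$ and $\widetilde E^{(2)}_{x_2,a_2}$ denote these well-defined common values.

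It remains to verify: (a) $\{\widetilde E^{(i)}\}$ satisfies the defining relations of $\cl A(\bb G_i)$, with idempotency following from the orthogonality $f_\alpha f_\beta=\delta_{\alpha,\beta}f_\alpha$ of the same-input PVM and the other relations being immediate; and (b) for every $(x_1,x_2)$, the elements $\widetilde E^{(1)}_{x_1,a_1}$ and $\widetilde E^{(2)}_{x_2,a_2}$ commute with product $f_{(x_1,x_2),(a_1,a_2)}$, again by PVM orthogonality at the input $(x_1,x_2)$ (using the $x_2$-independence to write both elements as sums over the same PVM indexed by $A_1\times A_2$). By universality of $\cl A(\bb G_i)$ we thus obtain commuting *-homomorphisms $\psi_i:\cl A(\bb G_i)\to\cl A(\bb G_1\times\bb G_2)$, which by the universal property of the algebraic tensor product combine into a *-homomorphism $\Psi:\cl A(\bb G_1)\otimes\cl A(\bb G_2)\to\cl A(\bb G_1\times\bb G_2)$ that is inverse to $\Phi$ on generators. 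The image of $\psi_1$ is a nonzero unital *-subalgebra of $\cl A(\bb G_1\times\bb G_2)$ in which the defining relations of $\cl A(\bb G_1)$ hold, so $\bb G_1$ has a perfect algebraic strategy (and symmetrically for $\bb G_2$), finishing (i). Part (ii) is then essentially verbatim in the C*-setting: the universal property of $C^*(\bb G_i)$ upgrades each $\psi_i$ to a *-homomorphism into $C^*(\bb G_1\times\bb G_2)$, and the defining universal property of $\otimes_{\max}$ combines them into a *-homomorphism $C^*(\bb G_1)\otimes_{\max}C^*(\bb G_2)\to C^*(\bb G_1\times\bb G_2)$, with the analogue of $\Phi$ as inverse. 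The main obstacle in both parts is the $x_2$-independence of $E_{x_1,a_1}(x_2)$; once that is established, the rest is a careful exercise in universal properties.
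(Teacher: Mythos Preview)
Your proposal is correct and follows essentially the same approach as the paper. Both arguments build $\Phi$ from the obvious tensor generators, define the marginals $E_{x_1,a_1}(x_2)=\sum_{a_2}f_{(x_1,x_2),(a_1,a_2)}$, and use synchronicity of $\bb G_1$ to show these are independent of $x_2$; the paper expresses this by computing $f_{x_1,x_2,a_1}f_{x_1,x_2',a_1}$ two ways via $f_{x_1,x_2,a_1}=1-\sum_{a\neq a_1}f_{x_1,x_2,a}$, while you multiply by the full resolution $1=\sum_{b_1,b_2}f_{(x_1,x_2'),(b_1,b_2)}$ and kill the $b_1\neq a_1$ terms---these are the same computation. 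The verification that $\widetilde E^{(1)}_{x_1,a_1}\widetilde E^{(2)}_{x_2,a_2}=f_{(x_1,x_2),(a_1,a_2)}$ and the passage to (ii) via the universal property of $\otimes_{\max}$ are likewise identical in spirit.
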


\begin{proof}
Write 
$\bb{G}_1 = (X_1,A_1,\lambda_1)$, $\bb{G}_2 = (X_2,A_2,\lambda_2)$; 
set $\bb{G} = \bb{G}_1\times \bb{G}_2$ and $\lambda = \lambda_1\times \lambda_2$.

\smallskip

(i) Assume that $\bb{G}_1$ and $\bb{G}_2$ have perfect algebraic strategies.
For $(x_1,x_2)\in X_1\times X_2$ and $(a_1,a_2)\in A_1\times A_2$, set 
$g_{(x_1,x_2),(a_1,a_2)} = e_{x_1,a_1}\otimes e_{x_2,a_2}$; then conditions (\ref{eq_cond}) are clearly satisfied
for the family $\{g_{(x_1,x_2),(a_1,a_2)} : x_i\in X_i, a_i\in A_i, i = 1,2\}$. 
It follows that $\cl A(\bb{G}_1\times \bb{G}_2)$ exists and that there exists a surjective *-homomorphism 
$\pi : \cl A(\bb{G}_1\times \bb{G}_2) \to \cl A(\bb{G}_1)\otimes \cl A(\bb{G}_2)$
such that 
\begin{equation}\label{eq_pi}
\pi\left(e_{(x_1,x_2),(a_1,a_2)}\right) = e_{x_1,a_1} \otimes e_{x_2,a_2}, \ \ \ x_1\in X_1, x_2\in X_2, a_1\in A_1, a_2\in A_2.
\end{equation}

Conversely, suppose that $\bb{G}_1\times \bb{G}_2$ has a perfect algebraic strategy.
For $x_1\in X_1$, $x_2\in X_2$ and $a_1\in A_1$, let 
$$f_{x_1,x_2,a_1} = \sum_{b\in A_2} e_{(x_1,x_2),(a_1,b)}.$$
We have
\begin{equation}\label{eq_sumnew}
\sum_{a\in A_1} f_{x_1,x_2,a} = 1.
\end{equation}
Let $x_1, x_2, x_2' \in X$, $a_1\in A_1$, and suppose that $x_2\neq x_2'$. 
Since $\bb{G}_1$ is synchronous, 
\begin{equation}\label{eq_x1x20}
e_{(x_1,x_2),(a,b)} e_{(x_1,x_2'),(a_1,c)} = 0, \ \ \mbox{ for all } a,a_1\in A_1, b,c\in A_2, a\neq a_1.
\end{equation}
By (\ref{eq_sumnew}) and (\ref{eq_x1x20}), 
\begin{eqnarray*}
f_{x_1,x_2,a_1} f_{x_1,x_2',a_1}
& = & 
\left(1 - \sum_{a\in A_1, a\neq a_1} f_{x_1,x_2,a}\right) f_{x_1,x_2',a_1}\\ 
& = & 
f_{x_1,x_2',a_1} - \sum_{a\in A_1, a\neq a_1}\sum_{b,c\in A_2} e_{(x_1,x_2),(a,b)} e_{(x_1,x_2'),(a_1,c)}\\
& = & 
f_{x_1,x_2',a_1}.
\end{eqnarray*}
On the other hand, 
\begin{eqnarray}
f_{x_1,x_2,a_1} f_{x_1,x_2',a_1}
& = & 
f_{x_1,x_2,a_1} \left(1 - \sum_{a\in A_1, a\neq a_1} f_{x_1,x_2',a}\right)
\nonumber\\ 
& = & 
f_{x_1,x_2,a_1} - \sum_{a\in A_1, a\neq a_1}\sum_{b,c\in A_2} e_{(x_1,x_2),(a_1,c)} e_{(x_1,x_2'),(a,b)}\label{eq_product2}\\
& = & 
f_{x_1,x_2,a_1}.\nonumber
\end{eqnarray}
Thus, 
$f_{x_1,x_2,a_1} = f_{x_1,x_2',a_1}$; we set $f_{x_1,a_1} = f_{x_1,x_2,a_1}$ for any $x_2\in X$. From the calculation~(\ref{eq_product2}) it now follows that $f_{x_1,a_1}^2=f_{x_1,a_1}$. 
Clearly, we also have that $f_{x_1,a_1}^* = f_{x_1,a_1}$. 
Finally, if $\lambda_1(x_1,x_1',a_1,a_1') = 0$ then 
$$\lambda(x_1,x_2,x_1',x_2',a_1,a_2,a_1',a_2') = 0, \ \ x_2, x_2'\in X_2, a_2, a_2'\in A_2,$$
and hence, fixing some $x_2\in X_2$, we have 
$$ f_{x_1,a_1} f_{x_1',a_1'} = \sum_{b,c\in A_2} e_{(x_1,x_2),(a_1,b)} e_{(x_1',x_2),(a_1',c)} = 0.$$
It follows that the subalgebra $\cl A_1$ of $\cl A(\bb{G}_1\times \bb{G}_2)$, generated by the family
$\{f_{x_1,a_1} : x_1\in X_1, a_1\in A_1\}$, satisfies the conditions of a perfect algebraic strategy for $\bb{G}_1$.

Let the projections $f_{x_2,a_2}\in \cl A$, $x_2\in X_2$, $a_2\in A_2$, be defined similarly, 
and let $\cl A_2$ be the subalgebra of $\cl A(\bb{G}_1\times \bb{G}_2)$ generated by them. 
Let $\rho_i : \cl A(\bb{G}_i)\to \cl A_i$ be the quotient map satisfying 
$$\rho_i(e_{x_i,a_i}) = f_{x_i,a_i}, \ \ \ x_i\in X_i, a_i\in A_i,$$
which exists by the virtue of the universality of $\cl A(\bb{G}_i)$, $i = 1,2$. 
Then $\rho_1\otimes \rho_2 : \cl A(\bb{G}_1)\otimes \cl A(\bb{G}_2) \to \cl A_1\otimes \cl A_2$ is a *-homomorphism. 

For $x_i\in X_i$ and $a_i\in A_i$, $i = 1,2$, we have 
\begin{eqnarray*}
f_{x_1,a_1} f_{x_2,a_2}
& = & 
\left(\sum_{b\in A_2} e_{(x_1,x_2),(a_1,b)}\right) \left(\sum_{a\in A_1} e_{(x_1,x_2),(a,a_2)}\right)\\
& = & 
e_{(x_1,x_2),(a_1,a_2)} = f_{x_2,a_2} f_{x_1,a_1}.
\end{eqnarray*}
It follows that 
$s_1 s_2 = s_2 s_1$ for all $s_i\in \cl A_i$, $i = 1,2$; 
thus, by the universal property of the tensor product, the mapping 
$$m : \cl A_1\otimes \cl A_2 \to \cl A(G_1\times G_2), \ \ \ m(s_1\otimes s_2) = s_1 s_2,$$
is a *-homomorphism. 
It follows that the map 
$\rho := m \circ (\rho_1\otimes \rho_2) : \cl A(\bb{G}_1)\otimes \cl A(\bb{G}_2) \to  \cl A(\bb{G}_1\times \bb{G}_2)$
is a *-homomorphism; moreover, 
\begin{equation}\label{eq_rho}
\rho(e_{x_1,a_1} \otimes e_{x_2,a_2}) = e_{(x_1,x_2),(a_1,a_2)}, \ \ x_i\in X_i, a_i\in A_i, i = 1,2.
\end{equation}
Equations (\ref{eq_pi}) and (\ref{eq_rho}) show that 
$\pi\circ\rho$ agrees with the identity on the generators of $\cl A(\bb{G}_1)\otimes \cl A(\bb{G}_2)$, and hence on the whole 
algebra. It follows that $\pi$ is a *-isomorphism from 
$\cl A(\bb{G}_1\times \bb{G}_2)$ onto $\cl A(\bb{G}_1)\otimes \cl A(\bb{G}_2)$.

\smallskip

(ii)
If $C^*(\bb{G}_1)$ and $C^*(\bb{G}_2)$ exist then, by setting 
$g_{(x_1,x_2),(a_1,a_2)} = e_{x_1,a_1}\otimes e_{x_2,a_2}$ inside the maximal tensor product 
$C^*(\bb{G}_1)\otimes_{\max} C^*(\bb{G}_2)$, 
we obtain a family of operators that satisfy the relations defining the C*-algebra of $\bb{G}_1\times \bb{G}_2$; thus, $C^*(\bb{G}_1)\otimes_{\max} C^*(\bb{G}_2)$
implements a perfect C*-strategy for $\bb{G}_1\times \bb{G}_2$. 
Conversely, if $\bb{G}_1\times\bb{G}_2$ has a perfect C*-strategy 
then the closure $\tilde{\cl A}_1$ in $C^*(\bb{G}_1\times\bb{G}_2)$ of the *-algebra $\cl A_1$ defined in the proof of (i)
provides a perfect C*-strategy for $\bb{G}_1$. By symmetry, such exists for $\bb{G}_2$ as well. 

Now suppose that $\bb{G}_1$ and $\bb{G}_2$ have perfect C*-strategies. 
Then, by the universal properties of $C^*(\bb{G}_1\times \bb{G}_2)$ and 
$C^*(\bb{G}_1) \otimes_{\max} C^*(\bb{G}_2)$, the maps $\pi$ and $\rho$ defined in (i) extend to *-homomorphisms
$\tilde{\pi} : C^*(\bb{G}_1\times \bb{G}_2) \to C^*(\bb{G}_1) \otimes_{\max} C^*(\bb{G}_2)$ and 
$\tilde{\rho} :  C^*(\bb{G}_1) \otimes_{\max} C^*(\bb{G}_2) \to C^*(\bb{G}_1\times \bb{G}_2)$, which shows that 
$C^*(\bb{G}_1\times \bb{G}_2) \cong C^*(\bb{G}_1)\otimes_{\max} C^*(\bb{G}_2)$ canonically. 
\end{proof}

\noindent {\bf Remark. }
Let $\bb{G}_i = (X_i, A_i, \lambda_i)$, $i = 1,2$, be synchronous games and 
${\rm x}\in \{{\rm loc}, {\rm q}, {\rm qa}, {\rm qc}, {\rm ns}\}$. 
If $p_i\in \cl C_{\rm x}(\lambda_i)$, $i = 1,2$,
$$p_1\otimes p_2 : A_1\times A_2 \times A_1\times A_2 \times X_1\times X_2 \times X_1\times X_2  \to [0,1],$$ 
and
$$(p_1\otimes p_2)(a_1,a_2,b_1,b_2 | x_1,x_2,y_1,y_2) = 
p_1(a_1,b_1 | x_1,y_1) p_2(a_2,b_2 | x_2,y_2),$$
then
$p_1\otimes p_2 \in \cl C_{\rm x}(\lambda_1\times \lambda_2)$.
Indeed, 
it is straightforward that $p_1\otimes p_2 \in \cl C_{\rm ns}(\lambda_1\times\lambda_2)$. 
Suppose that $p_i\in \cl C_{\rm qc}$, $i = 1,2$, and write
$$p_i(a_i,b_i | x_i,y_i) = \langle E_{x_i,a_i}F_{y_i,b_i}\xi_i,\xi_i\rangle, \ \ \ i = 1,2,$$ 
as in the definition of quantum commuting correlations. 
Then
\begin{eqnarray*}
& & (p_1 \otimes p_2)(a_1,a_2,b_1,b_2 | x_1,x_2,y_1,y_2)\\
& = & \langle ((E_{x_1,a_1}\otimes E_{x_2,a_2})(F_{y_1,b_1} \otimes F_{y_2,b_2})(\xi_1\otimes \xi_2), (\xi_1\otimes \xi_2)\rangle,
\end{eqnarray*}
showing that $p_1\times p_2 \in \cl C_{\rm qc}$. 
The argument for $\xx = \qq$ and $\xx = \loc$ is similar, while the statement about 
${\rm qa}$ follows from the definition of $\cl C_{\rm qa}$ as the closure of $\cl C_{\qq}$. 

It is well-known, and easily seen that, 
in the converse direction, 
if $p \in \cl C_{\rm x}(\lambda_1\times \lambda_2)$ and $x_2,y_2\in X_2$, then the 
correlation
$$p_{x_2,y_2} : A_1\times A_1\times X_1\times X_1 \to [0,1],$$ 
given by
$$p_{x_2,y_2}(a_1,b_1|x_1,y_1) = \sum_{a_2,b_2\in A_2} p(a_1,a_2,b_1,b_2 | x_1,x_2,y_1,y_2),$$
is a perfect strategy for $\bb{G}_1$.
The argument from Theorem \ref{th_proiff} shows that, if the games $\bb{G}_1$ and $\bb{G}_2$ are synchronous, the correlation 
$p_{x_2,y_2}$ is independent of the choice of $x_2$ and $y_2$.

\section{Synchronous Values of Games}

A synchronous game $\bb{G} = (X,A,\lambda)$, equipped with a probability distribution $\pi$ on the 
set $X\times X$ of questions, will be called a \emph{synchronous game with density}. 
For each 
${\rm x}\in \{{\rm loc}, {\rm qa}, {\rm qc}, {\rm ns}\}$, we
define the \emph{synchronous ${\rm x}$-value} $\omega_{\rm x}^{\rm s}(\bb G,\pi)$ of $(\bb G, \pi)$ by setting 
\begin{equation}\label{eq_value}
\omega_{\rm x}^{\rm s}(\bb{G},\pi) = \sup\left\{\sum_{x\in X, y\in Y} \sum_{a\in A,b\in B} 
\pi(x,y) \lambda(x,y,a,b)p(a,b|x,y) : p\in \cl C_{\rm x}^{\rm s}\right\},
\end{equation}
where $\cl C_{\rm x}^{\rm s}$ denotes the set of all synchronous correlations over $(X,A)$ of class ${\rm x}$. 

Note that each term in the supremum is the expected probability of the players winning the game given that they use the conditional probability density $p$. So the synchronous {\rm x}-value represents the optimal winning probability over the corresponding set of synchronous correlations.

\begin{remark}\label{r_sv}
{\rm 
\begin{itemize}
\item[(i)]
Let ${\rm x}\in \{{\rm loc}, {\rm qa}, {\rm qc}, {\rm ns}\}$. Then 
$\omega_{\rm x}^{\rm s}(\bb{G},\pi) \leq 1$ and, since $\cl C_{\xx}^{\rm s}$ is closed, 
the supremum in the definition of $\omega_{\rm x}(\bb{G},\pi)$ is achieved. 
It follows that a synchronous game $G$ has a perfect ${\rm x}$-strategy if and only of $\omega_{\rm x}^{\rm s}(\bb{G}) = 1$.

\item[(ii)]
It was shown in \cite{kps} that $\cl C_{\rm qa}^{\rm s} = \overline{\cl C_{\rm q}^{\rm s}}$. Thus, 
if in the right hand side of (\ref{eq_value}) one employs correlations of the class $\cl C_{\rm q}^{\rm s}$, 
the corresponding supremum coincides with $\omega_{\rm qa}^{\rm s}(\bb{G},\pi)$. 
\end{itemize}
}
\end{remark}

\medskip

Given synchronous games with densities $(\bb{G}_i, \pi_i) = (X_i,A_i,\lambda_i,\pi_i)$, $i = 1,2$, equip 
the question set $(X_1\times X_2) \times (X_1\times X_2)$ of the game $\bb{G}_1\times \bb{G}_2$
with the probability distribution $\pi_1\times\pi_2$, given by 
$$(\pi_1\times\pi_2)\left((x_1,x_2),(y_1,y_2)\right) = \pi_1(x_1,y_1)\pi_2(x_2,y_2).$$ We write $(\bb G_1 \times \bb G_2, \pi_1 \times \pi_2)$ for this synchronous game with density.

\begin{proposition}\label{th_prv}
Let $(\bb{G}_i, \pi_i) = (X_i,A_i,\lambda_i,\pi_i)$, $i = 1,2$, be synchronous games with densities and 
${\rm x}\in \{{\rm loc}, {\rm qa}, {\rm qc}, {\rm ns}\}$. 
Then 
$$\omega_{\xx}^{\rm s}(\bb{G}_1,\pi_1)\omega_{\xx}^{\rm s}(\bb{G}_2,\pi_2)\leq 
\omega_{\xx}^{\rm s}(\bb{G}_1\times \bb{G}_2,\pi_1\times\pi_2).$$

\end{proposition}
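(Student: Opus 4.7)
The plan is to exhibit a synchronous correlation of class $\xx$ for the product game whose expected winning probability equals the product of the two individual optimal values, and then invoke the definition of $\omega_\xx^{\rm s}(\bb{G}_1\times\bb{G}_2,\pi_1\times\pi_2)$ as a supremum to conclude.

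First, by Remark \ref{r_sv}(i), the supremum in (\ref{eq_value}) is attained; pick synchronous correlations $p_i\in\cl C_\xx^{\rm s}$ over $(X_i,A_i)$ with
\[
\omega_\xx^{\rm s}(\bb{G}_i,\pi_i) = \sum_{x_i,y_i\in X_i}\sum_{a_i,b_i\in A_i}\pi_i(x_i,y_i)\lambda_i(x_i,y_i,a_i,b_i)p_i(a_i,b_i|x_i,y_i),
\]
for $i=1,2$. Form the product correlation $p_1\otimes p_2$ over $(X_1\times X_2, A_1\times A_2)$ as in the Remark following Theorem \ref{th_proiff}. That Remark already contains the constructions (tensoring POVMs and states in the $\qc$, $\qq$, $\loc$ cases, and taking limits for $\qa$) that produce the required representation of $p_1\otimes p_2$ as an element of $\cl C_\xx$; the arguments there do not rely on $p_1$ or $p_2$ being perfect, only on their membership in $\cl C_\xx$.

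Next I verify synchronicity of $p_1\otimes p_2$. For any $(x_1,x_2)\in X_1\times X_2$ and $(a_1,a_2)\neq (b_1,b_2)$ in $A_1\times A_2$, either $a_1\neq b_1$ or $a_2\neq b_2$, so synchronicity of $p_1$ and $p_2$ forces the corresponding factor in
\[
(p_1\otimes p_2)\bigl((a_1,a_2),(b_1,b_2)\mid (x_1,x_2),(x_1,x_2)\bigr) = p_1(a_1,b_1|x_1,x_1)\,p_2(a_2,b_2|x_2,x_2)
\]
to vanish. Hence $p_1\otimes p_2\in\cl C_\xx^{\rm s}$ over $(X_1\times X_2,A_1\times A_2)$, and is thus an admissible strategy for $(\bb{G}_1\times\bb{G}_2,\pi_1\times\pi_2)$.

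Finally, the expected winning probability of $p_1\otimes p_2$ on the product game factorizes directly: since $(\pi_1\times\pi_2)((x_1,x_2),(y_1,y_2))=\pi_1(x_1,y_1)\pi_2(x_2,y_2)$ and $(\lambda_1\times\lambda_2)=\lambda_1\cdot\lambda_2$ at the corresponding coordinates, the sum over all $(x_i,y_i,a_i,b_i)$ separates into a product of the two sums defining $\omega_\xx^{\rm s}(\bb{G}_i,\pi_i)$. Taking the supremum over admissible synchronous $\xx$-strategies for the product game yields the claimed inequality. The only delicate point is ensuring that the tensor product correlation lies in the correct class $\cl C_\xx^{\rm s}$, and this is handled by the construction noted above together with the straightforward synchronicity check; everything else is a bookkeeping factorization.
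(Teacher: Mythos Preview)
Your proof is correct and follows essentially the same strategy as the paper's: exhibit $p_1\otimes p_2$ as a synchronous correlation of class $\xx$ for the product game and factor the expected winning probability. The only cosmetic difference is that the paper invokes the trace characterisation of Theorem~\ref{th_cha} to see that the product correlation lies in $\cl C_\xx^{\rm s}$ (the tensor of two traces of the appropriate type is again such a trace, and trace representations are automatically synchronous), whereas you cite the POVM/tensor construction from the Remark after Theorem~\ref{th_proiff} and verify synchronicity by hand; also, the paper works directly with the suprema rather than first invoking attainment via Remark~\ref{r_sv}(i). Your route has the mild advantage of handling the $\rm ns$ case transparently, since Theorem~\ref{th_cha} does not address it.
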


\begin{proof}
If $p_i(a_i,b_i|x_i,y_i) \in \cl C_{\rm x}, \, i=1,2$, then applying Theorem~\ref{th_cha}, it is easily checked that
\[ p((a_1,a_2),(b_1,b_2)| (x_1,x_2), (y_1,y_2)) := p_1(a_1,b_1|x_1,y_1)p_2(a_2,b_2|x_2,y_2) \in \cl C_{\rm x}.\]
Consequently,
\begin{eqnarray*}
& & \omega_{\xx}^{\rm s}(\bb{G}_1,\pi_1)\omega_{\xx}^{\rm s}(\bb{G}_2,\pi_2)\\
& = & 
\prod_{i=1}^2 \sup\{\sum_{x_i,y_i,a_i,b_i} \pi_i(x_i,y_i) \lambda_i(x_i,y_i,a_i,b_i)
p(a_i,b_i | x_i,y_i) : p_i\in \cl C_{\rm x}^{\rm s}\}\\
& \leq & 
\sup\{\sum_{x_i,y_i,a_i,b_i, i =1,2} 
\pi_1(x_1,y_1) \pi_2(x_2,y_2) \lambda (x_1,x_2,y_1,y_2,a_1,a_2,b_1,b_2)\\
& & 
p(a_1,a_2,b_1,b_2 | x_1,x_2,y_1,y_2) : p\in \cl C_{\rm x}^{\rm s}\}\\
& = & 
\omega_{\xx}^{\rm s}(\bb{G}_1\times \bb{G}_2,\pi_1\times\pi_2).
\end{eqnarray*} 
\end{proof}

The inequality in Theorem \ref{th_prv} can be strict, even in the case of the 
classical value $\omega_{\rm loc}$. However, it is known that if $\omega_{\rm loc}(\bb G, \pi) <1$ then
\[ \omega_{\rm loc}(\bb G^n, \pi^n) \to 0,\]
where $\bb G^n$ represents the $n$-fold product of the game $\bb G$ with itself. 
In \cite{hmnpr} an example of a game is given for which the sequence 
of values $\omega_{\rm loc}^s(\bb G^n, \pi^n)$ is strictly increasing with limit 1.
Thus showing that this fundamental result fails utterly in the context of synchronous values.

However, the game in \cite{hmnpr} was not synchronous.
We provide two examples towards this end. The first is an example of a synchronous game with uniform distribution 
on the inputs for which the inequality is strict. 
The second example is of a {\it symmetric} synchronous game with uniform distribution on the 
inputs for which the inequality is strict.   

A rule function $\lambda$ is called {\it symmetric} if $\lambda(x,y,a,b) = \lambda(y,x,b,a)$.  
Note that, since synchronous correlations $p$ arise from traces, we have that 
\[ p(a,b|x,y) = \tau(e_{x,a}e_{y,b}) = \tau(e_{y,b}e_{x,a}) = p(b,a|y,x);\]
thus, they satisfy an extra transposition invariance.  
For this reason, arguably the most suitable 
context to study synchronous values is that of symmetric synchronous games 
with symmetric distributions $\pi$ on their inputs, that is, ones satisfying 
$\pi(x,y) = \pi(y,x)$ for all $x$ and $y$. 
Our examples illustrate that even in this more restrictive setting the synchronous value can be 
strictly supermultiplicative.  However, unlike in \cite{hmnpr}, the values are not increasing nor are we able to determine their asymptotic behaviour.

For a rule function $\lambda : X\times Y \times A\times B \to \{0,1\}$ of a 
game over the quadruple $(X,Y,A,B)$, and $(x,y)\in X\times Y$, let 
$$E_{x,y} = \{(a,b)\in A\times B : \lambda(x,y,a,b) = 1\}.$$
Clearly, specifying $\lambda$ is equivalent to specifying the family $\{E_{x,y} : x,y\}$ of subsets of $A\times B$. 
In synchronous games, we have $X = Y$, $A = B$, and 
$E_{x,x} \subseteq \{(a,a) : a\in A\}$, for every $x\in X$. 
Note that the local synchronous strategies of a synchronous game arise from functions
$f : X\to A$: a function $f : X\to A$ gives rise to the local no-signalling correlation 
$p_f = (p(a,b|x,y))$ given by $p_f(a,b|x,y) = 1$ if $a = f(x)$ and $b = f(y)$, and $p(a,b|x,y) = 1$ otherwise;
conversely, every local synchronous strategy is the convex combination of strategies of the form $p_f$
(this can be inferred, e.g., from \cite[Corollary 5.6]{psstw}). 

For a synchronous non-local game $\bb{G}$ with rule function $\lambda : X\times X\times A\times A\to \{0,1\}$ and probability distribution $\pi : X\times X\to [0,1]$, 
we write 
$$h_{\bb{G}} = \sum_{x,y\in X} \pi(x,y) \sum_{a,b\in A}\lambda(x,y,a,b)e_{x,a} e_{y,b},$$
viewed as an element of the C*-algebra $\cl A_{X,A}$.

\begin{example}\label{ex_1}
\rm
Let $\bb{G}$ be the game with $X = Y = A = B = [2] := \{1,2\}$ and rule function 
$\lambda$ specified by setting 
$$E_{1,1} = E_{2,2} = \{(1,1), (2,2)\}, E_{1,2} = \{(1,1)\}, \ E_{2,1} = \{(1,2)\}.$$
We claim that, if $[2]\times [2]$ is equipped with the uniform probability distribution $\pi$, then
\begin{equation}\label{eq_all1}
\omega^s_{\rm loc}(G, \pi) =\omega^s_{\rm q}(G, \pi) = \omega^s_{\rm qc}(G, \pi) = 3/4,
\end{equation}
while 
\begin{equation}\label{eq_all2}
\omega^s_{\rm loc}(\bb{G} \times \bb{G}, \pi \times \pi)
= 
\omega^s_{\rm q}(\bb{G} \times \bb{G}, \pi \times \pi)
= 
\omega^s_{\rm qc}(\bb{G} \times \bb{G}, \pi \times \pi)
=
10/16.
\end{equation}

Indeed, we have that
\[ h_{\bb{G}} = \frac{1}{4}(2I + e_{1,1} e_{2,1} + e_{2,1}e_{1,2}).\]
The synchronous values of 
$\bb{G}$ are obtained by computing the supremum of the quantities $\tau(h_{\bb{G}})$ 
over traces $\tau$ of $\cl A(\bb{G})$ of particular type.
Setting $p = e_{1,1}$ and $q = e_{2,1}$, we have
\[4h_{\bb{G}} = 2I + pq+ q(I-p) = 2I + q,\]
and (\ref{eq_all1}) follows.

Write $X_1 = X_2 = X$ and $A_1 = A_2 = A$ and let
$$\{E_{(x_1,x_2),(y_1,y_2)} : (x_1,y_1) \in X_1\times X_1, (x_2,y_2)  \in X_2\times X_2\}$$ 
be the family of subsets of $(A_1\times A_2)\times (A_1\times A_2)$ that determine $\lambda \times \lambda$. 
Note that 
$$E_{(x_1,x_2),(y_1,y_2)} = \{((a_1,a_2), (b_1,b_2)) : (a_1,b_1)\in E_{x_1,y_1} \mbox{ and } (a_2,b_2)\in E_{x_2,y_2}\}.$$
Consider the function $h : X_1\times X_2\to A_1\times A_2$, given by 
$$(1,1)\to (1,1), \ \ \ \ (1,2)\to (1,1), \ \ \ \ (2,1)\to (1,1), \ \ \ \ (2,2)\to (1,1).$$
For each $((x_1,x_2),(y_1,y_2))$ listed below, we indicate whether the value 
$(h\times h)((x_1,x_2),(y_1,y_2))$ belongs to the set $E_{(x_1,x_2),(y_1,y_2)}$:
\begin{itemize}
\item 
$E_{(1,1),(1,2)} = \{((1,1),(1,1)), ((2,1),(2,1))\}$ $\Longrightarrow$ Yes;

\item 
$E_{(1,1),(2,1)} = \{((1,1),(1,1)), ((1,2),(1,2))\}$ $\Longrightarrow$ Yes;

\item 
$E_{(1,1),(2,2)} = \{((1,1),(1,1))\}$ $\Longrightarrow$ Yes;

\item 
$E_{(1,2),(1,1)} = \{((1,1),(1,2)), ((2,1),(2,2))\}$ $\Longrightarrow$ No;

\item 
$E_{(1,2),(2,1)} = \{((1,1),(1,2))\}$ $\Longrightarrow$ No;

\item 
$E_{(1,2),(2,2)} = \{((1,1),(1,1)),((1,2),(1,2))\}$ $\Longrightarrow$ Yes;

\item 
$E_{(2,1),(1,2)} = \{((1,1),(2,1))\}$ $\Longrightarrow$ No;

\item 
$E_{(1,2),(2,1)} = \{((1,1),(1,2))\}$ $\Longrightarrow$ No;

\item 
$E_{(2,1),(2,2)} = \{((1,1),(1,1)),((2,1),(2,1))\}$ $\Longrightarrow$ Yes;

\item 
$E_{(2,2),(1,1)} = \{((1,1),(2,2))\}$ $\Longrightarrow$ No;

\item 
$E_{(2,2),(1,2)} = \{((1,1),(2,1)),((1,2),(2,2))\}$ $\Longrightarrow$ No;

\item 
$E_{(2,2),(2,1)} = \{((1,1),(1,1)),((2,1),(2,1))\}$ $\Longrightarrow$ Yes.
\end{itemize}
The remaining pairs $((x_1,x_2),(y_1,y_2))$ are of the form $((x_1,x_2),(x_1,x_2))$; since the game $\lambda$ is 
synchronous, we have that 
$(h\times h)((x_1,x_2),(x_1,x_2))$ meets $E_{(x_1,x_2),(x_1,x_2)}$. This shows that 
$$\omega_{\rm loc}(\bb G \times \bb G, \pi \times \pi) \geq \frac{10}{16}.$$ 

On the other hand, let $h_{\bb{G}\times \bb{G}}$ be the element of $\cl A(\bb{G}\times \bb{G})$ with 
\begin{multline}
16 h_{\bb{G}\times \bb{G}} = 4 \cdot I + e_{11,11}\big(e_{12,11} +  e_{22,11} + e_{21,11} \big) + \\
e_{11,12} \big(e_{21,12} + e_{12,11} \big) + e_{11,21}e_{21,11}+\\
e_{11,22} \big( e_{22,11} + e_{21,12} \big) + \\
e_{12,21} \big( e_{21,12} + e_{22,11} \big) + e_{12,22}e_{22,12}  + \\
e_{21,11}e_{22,11} + e_{21,21}e_{22,21} + e_{21,12}e_{22,11}  +
+ e_{21,22} e_{22,21},
\end{multline}
and note that 
$$\omega^s_{\rm qc}(\bb{G}\times\bb{G},\pi\times\pi) = 
\sup\{\tau(h_{\bb{G}\times\bb{G}}) : \tau \mbox{ a trace on } \cl A(\bb{G}\times\bb{G})\}.$$
For any trace $\tau$, if $p,q \ge 0$ and $r \ge q$, then 
$0 \le \tau(pq) \le \tau(pr)$. We thus see that
\begin{eqnarray*}
16 \tau(h_{\bb{G} \times \bb{G}}) 
& \le & 
4 + \tau(3 e_{11,11}) + \tau(3 e_{11,12}) + \tau(3 e_{11,22}) + \tau(2e_{12,21})\\
& + & \tau(e_{12,22}) + \tau(e_{21,11}+e_{21,21} + e_{21,12} + e_{21,22} )\\
& \leq & 
4 + 3 +2+1 =10,
\end{eqnarray*}
and (\ref{eq_all2}) is proved.
\end{example}

The game, exhibited above, shows that strict inequality may take place in Proposition \ref{th_prv} even for the smallest non-trivial games. 
At the expense of increasing the number of inputs by one, the game can be made symmetric 
-- this is achieved in the next example, which also exhibits a separation between the 
local and the quantum synchronous value.

\begin{example}\label{ex_2}
\rm 
Let $X = [3]$, $A = [2]$, equip $X\times X$ with the uniform probability distribution, 
and let $\pi : X\times X\times A\times A \to \{0,1\}$ be the rule 
function determined by the sets 
\begin{itemize}
\item $E_{1,2} = E_{2,1} = E_{2,3} = E_{3,2} = \{(1,2)\}$;

\item $E_{1,3} = E_{3,1} = \{(2,1)\}$;

\item $E_{1,1} = E_{2,2} = E_{3,3} = \{(1,1), (2,2)\}$.
\end{itemize}
The eight possibilities for functions $g : X\to A$ are:
\begin{itemize}
\item 
$g_1\times g_1 \times g_1 : (1,2,3) \to (1,1,1)$, yielding $R_{g_1} = 3$;
\item 
$g_2\times g_2 \times g_2 : (1,2,3) \to (1,1,2)$, yielding $R_{g_2} = 5$;
\item 
$g_3\times g_3 \times g_3 : (1,2,3) \to (1,2,1)$, yielding $R_{g_3} = 5$;
\item 
$g_4\times g_4 \times g_4 : (1,2,3) \to (1,2,2)$, yielding $R_{g_4} = 5$;
\item 
$g_5\times g_5 \times g_5 : (1,2,3) \to (2,1,1)$, yielding $R_{g_5} = 5$;
\item 
$g_6\times g_6 \times g_6 : (1,2,3) \to (2,1,2)$, yielding $R_{g_6} = 5$;
\item 
$g_7\times g_7 \times g_7 : (1,2,3) \to (2,2,1)$, yielding $R_{g_7} = 5$;
\item 
$g_8\times g_8 \times g_8 : (1,2,3) \to (2,2,2)$, yielding $R_{g_8} = 3$.
\end{itemize}
Thus, $\omega_{\rm loc}(\bb{G},\pi) = \frac{5}{9}$.

Letting as before $X_1 = X_2 = X$ and $A_1 = A_2 = A$,
consider the function $\eta : X_1\times X_2\to A_1\times A_2$, given by 
$$
\eta(x_1,x_2) = 
\begin{cases}
(1,1) & \text{if } (x_1,x_2)\in \{(1,1), (1,3), (3,1), (3,3)\}\\
(1,2) & \text{if } (x_1,x_2)\in \{(1,2), (3,2)\}\\
(2,1) & \text{if } (x_1,x_2)\in \{(2,1), (2,3)\}\\
(2,2) & \text{if } (x_1,x_2) = (2,2).\\
\end{cases}
$$
Among the inputs $(x_1,x_2,y_1,y_2)$ of the product game $\mu\times \mu$ with 
$x_1\leq x_2$, $y_1\leq y_2$, and $(x_1,y_1)\neq (x_2,y_2)$, we single out the following
\begin{eqnarray*}
& & 
E_{(1,1),(1,2)} = \{((1,1),(1,2)), ((2,1),(2,2))\},\\
& & 
E_{(1,1),(2,1)} = \{((1,1),(2,1)), ((1,2),(2,2))\},\\
& & 
E_{(1,1),(2,2)} = \{((1,1),(2,2))\},\\
& & 
E_{(1,2),(2,2)} = \{((1,1),(2,1)), ((1,2),(2,2))\},\\
& & 
E_{(2,1),(2,2)} = \{((1,1),(1,2)), ((2,1),(2,2))\},\\
& & 
E_{(1,3),(2,3)} = \{((1,1),(2,1)), ((1,2),(2,2))\},\\
& & 
E_{(3,1),(3,2)} = \{((1,1),(1,2)), ((2,1),(2,2))\},
\end{eqnarray*}
which all meet the graph of the function $\eta\times \eta$. 
Among the inputs $(x_1,x_2,$ $y_1,y_2)$ of the product game $\bb{G}\times \bb{G}$ with 
$x_1\leq x_2$, $y_1\geq y_2$, and $(x_1,y_1)\neq (x_2,y_2)$, we single out 
\begin{eqnarray*}
& & 
E_{(2,3),(2,2)} = \{((1,1),(1,2)), ((2,1),(2,2))\},\\
& & 
E_{(3,3),(3,2)} = \{((1,1),(2,1)), ((2,1),(2,2))\},
\end{eqnarray*}
which also meet the graph of $\eta\times \eta$.
By symmetry, we thus have that 
$$\omega_{\rm loc}(\bb G \times \bb G, \pi \times \pi) \geq \frac{27}{81} > \frac{25}{81} = \omega_{\rm loc}(\bb G, \pi)^2.$$
\end{example}

We finish this note by computing the
quantum value of the game $\bb{G}$ from Example \ref{ex_2}, implying that it is strictly 
greater than the local (classical) value. 


\begin{theorem}\label{p_ifclos}
Let $(\bb{G},\pi)$ be the non-local game from Example \ref{ex_2}. 
Then
$$\omega^s_{\rm qc}(\bb{G},\pi) = 
\omega^s_{\rm q}(\bb{G},\pi) = 7/12 > 5/9 = \omega^s_{\rm loc}(\bb{G},\pi).$$
\end{theorem}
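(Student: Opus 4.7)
The plan is to reduce both $\omega^s_{\rm q}(\bb{G},\pi)$ and $\omega^s_{\rm qc}(\bb{G},\pi)$ to an optimization of $\tau(h_{\bb{G}})$ over traces $\tau$ on the synchronicity algebra $\cl A_{X,A} \cong C^*(\bb{Z}_2 \ast \bb{Z}_2 \ast \bb{Z}_2)$, as allowed by Theorem~\ref{th_cha}. Expanding $h_{\bb{G}}$ from the rule function of Example~\ref{ex_2} (and noting that each diagonal pair $(x,x)$ contributes $e_{x,1}+e_{x,2}=1$) gives
\begin{equation*}
9\, h_{\bb{G}} \;=\; 3\cdot 1 \;+\; e_{1,1}e_{2,2} + e_{2,1}e_{1,2} + e_{2,1}e_{3,2} + e_{3,1}e_{2,2} + e_{1,2}e_{3,1} + e_{3,2}e_{1,1}.
\end{equation*}

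Next, I would set $p_i := e_{i,1}$ for $i \in [3]$ and introduce the self-adjoint unitaries $u_i := 2p_i - 1$. Using $e_{i,2} = 1 - p_i$ and the identity $\tau(p_i(1-p_j)) + \tau(p_j(1-p_i)) = \tau((p_i - p_j)^2) = (1 - \tau(u_iu_j))/2$ (which follows from traciality), a short calculation yields
\begin{equation*}
\tau(h_{\bb{G}}) \;=\; \frac{1}{2} \;-\; \frac{1}{18}\bigl(\tau(u_1u_2) + \tau(u_1u_3) + \tau(u_2u_3)\bigr).
\end{equation*}
Maximizing $\tau(h_{\bb{G}})$ is therefore equivalent to minimizing the sum of off-diagonal entries of the $3\times 3$ matrix $M := (\tau(u_iu_j))_{i,j=1}^3$. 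Since $\tau$ is a positive tracial functional on $\cl A_{X,A}$ and the $u_i$ are self-adjoint, the matrix $M$ is real, symmetric, positive semidefinite, with $M_{ii} = 1$; pairing it with the all-ones vector gives $3 + 2\bigl(\tau(u_1u_2)+\tau(u_1u_3)+\tau(u_2u_3)\bigr) \geq 0$, hence $\tau(h_{\bb{G}}) \leq 1/2 + 1/12 = 7/12$ for every trace. This is exactly the upper bound $\omega^s_{\rm qc}(\bb{G},\pi) \leq 7/12$.

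For the matching lower bound I would exhibit an explicit finite-dimensional quantum realization saturating the PSD constraint, namely $M_{ij} = -1/2$ for $i\neq j$. Fix three coplanar unit vectors $v_1,v_2,v_3 \in \bb{R}^3$ at mutual angles $2\pi/3$, so that $v_i \cdot v_j = -1/2$ for $i\neq j$; set $u_i := v_i \cdot \vec\sigma \in M_2(\bb{C})$ (using the Pauli matrices), $p_i := (1+u_i)/2$, and $\tau := \tfrac{1}{2}\Tr$. The standard identity $(\vec a \cdot \vec\sigma)(\vec b \cdot \vec\sigma) = (\vec a \cdot \vec b)I + i(\vec a \times \vec b)\cdot \vec\sigma$ then gives $\tau(u_iu_j) = v_i \cdot v_j = -1/2$, so $\tau(h_{\bb{G}}) = 7/12$. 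The assignment $e_{x,1}\mapsto p_x$ extends to a $*$-representation $\pi : \cl A_{X,A} \to M_2(\bb{C})$, so by Theorem~\ref{th_cha}(iii) this data produces a synchronous quantum correlation attaining $7/12$. Combined with the general inequality $\omega^s_{\rm q}\leq \omega^s_{\rm qc}$, both equalities follow, and the strict inequality with $5/9 = \omega^s_{\rm loc}(\bb{G},\pi)$ was already established in Example~\ref{ex_2}. The only step that requires any insight is the substitution $u_i = 2p_i - 1$, which linearizes the objective and recasts the upper bound as a simple semidefiniteness constraint; once this is in place, everything else is either a direct computation or an explicit Pauli-matrix construction.
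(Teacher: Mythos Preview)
Your proof is correct and takes a genuinely different, more elementary route than the paper's argument.

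The paper first reduces to finite-dimensional representations (citing \cite{russell}), then uses the commutation result of \cite{dpp} to show that $p+q+r$ is central, and invokes the classification of sums of projections from \cite{krs} to conclude that in each irreducible block $p+q+r = tI_n$ with $t\in\{0,1,2,3,3/2\}$; it then explicitly parametrises the projections for each feasible $t$ and checks case by case that the maximum of $\tau(h_{\bb G})$ equals $7/12$, attained at $t=3/2$ in $M_2$. For the quantum commuting value the paper needs a further direct-integral decomposition of the tracial von Neumann algebra to reduce to the factor case. By contrast, your substitution $u_i = 2e_{i,1}-1$ linearises $\tau(h_{\bb G})$ and turns the upper bound into the single observation that the Gram matrix $(\tau(u_iu_j))$ is positive semidefinite with unit diagonal; this one-line semidefinite bound works for \emph{every} trace on $\cl A_{X,A}$ and therefore handles both the q and qc cases simultaneously, without any appeal to \cite{dpp}, \cite{krs}, or direct integrals. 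Your Pauli-matrix lower bound is essentially the same $t=3/2$ representation the paper finds, only arrived at directly rather than through the case analysis. The paper's approach gives more structural information (a complete list of the optimal configurations), while yours is shorter, self-contained, and arguably the ``right'' proof of the bare value statement.
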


\begin{proof}
By \cite{russell}, the set $\cl C^s_{\rm q}$ in our (three input, two output) case 
is closed. Thus, the supremum (\ref{eq_value}) 
used to compute $\omega_{\rm q}(\bb{G},\pi)$
is achieved. 
We write for brevity $e_{xa}$ in the place of $e_{x,a}$.
Let $\tau$ be a trace on a finite dimensional
representation of $\cl A_{X,A}$ 
for which the value $\omega_{\rm q}^{s}(\bb{G},\pi)$ is attained. 
By splitting the representation into irreducible blocks, we can assume that all projections $e_{xa}$ are contained in $M_n$ for some $n\in \bb{N}$. We note that
\begin{multline} 
9 h_{\bb{G}} = 3 I + e_{11}e_{22} + e_{21}e_{12} + e_{21}e_{32} + e_{31} e_{22} + e_{12} e_{31} + e_{32}e_{11}. 
\end{multline}
Setting $p = e_{11}$, $q = e_{21}$, and 
$r = e_{31}$, so that 
$e_{12} = I-p, e_{22} = I-q, e_{32} = I-r$ we have
$$
9 h_{\bb{G}} = 3 I + p(1-q) + q(1-p) + q(1-r) + r(1-q) + (1-p)r + (1-r)p.$$
By \cite{dpp}, 
$p$ commutes with $q+r$, $q$ commutes with $p+r$  and $r$ commutes with $p+q$.  This implies that $p+q+r$ commutes with $p$, $q$ and $r$.
Thus, 
$$
9 \tau(h_{\bb{G}}) = 3 + \tau(2p(1-q-r) + 2r + 2q(1-r)).$$
By \cite{krs}, 
there exists $t \in \bb R^+$ such that 
\begin{equation}\label{eq_p+q+r}
p+q+r = tI_n. 
\end{equation}
Write 
\[ p = \begin{pmatrix} I_k & 0\\0 & 0 \end{pmatrix}, \mbox{ so that } q+r = \begin{pmatrix} C & 0 \\ 0 & D \end{pmatrix},\]
and note that (\ref{eq_p+q+r}) yields
\[ C= (t-1) I_k \ \mbox{ and } \ D= tI_{n-k}.\]
Writing
$$q= \begin{pmatrix} A & X \\X^* & B \end{pmatrix},$$
we have 
$$r= \begin{pmatrix} (t-1)I_k - A & -X \\ - X^* & tI_{n-k} - B \end{pmatrix}.$$
The identities $q^2=q$ and $r^2= r$ yield
\[ A^2+XX^* = A,  \ \ \ ((t-1)I_k - A)^2 +XX^* = (t-1)I_k - A\]
and
\[ B^2+X^*X = B, \ \ \ 
(tI_{n-k}-B)^2 +X^*X = tI_{n-k} - B.\]
Thus, 
\[ ((t-1)I_k -A)^2 + A - A^2 = (t-1)I_k - A  \implies A = \frac{t-1}{2} I_k,\]
and 
\[ (tI_{n-k} - B)^2 +B - B^2 = tI_{n-k} -B \implies B = \frac{t}{2} I_{n-k}.\]
Therefore
\[ q = \begin{pmatrix} \frac{t-1}{2}I_k & X \\ X^* & \frac{t}{2} I_{n-k} \end{pmatrix}\]
and using again the fact that $q$ is an idempotent, we see that 
\[ XX^* = \left(\frac{t-1}{2} - 
\frac{t-1}{2})^2 \right) I_k
= \frac{4t - t^2 -3}{4} I_k\]
and
\[ X^*X = \frac{2t-t^2}{4} I_{n-k}.\]
Since $\tau(X^*X) = \tau(XX^*)$, we have
\[k(4t-t^2-3) =(2t - t^2)(n-k) \implies k(6t-2t^2-3) = (2t - t^2) n.\]
This shows that
\[ k/n= \frac{2t- t^2}{6t-2t^2-3}.\]

By \cite{krs}, the following values of $t$ are feasible:
\[ t= 0,1,2,3, 3/2.\]
This leads to
\[ k/n = 0,1,0,1, 1/2.\]
The case where $t=3/2$ and $k/n= 1/2$ can be realised in $M_2$ and we have that
\[ p = \begin{pmatrix} 1 & 0 \\ 0 & 0 \end{pmatrix}, q= \begin{pmatrix} 1/4 & \sqrt{3}/4 \\ \sqrt{3}/4 & 3/4 \end{pmatrix},  r= \begin{pmatrix} 1/4 & -\sqrt{3}/4 \\ -\sqrt{3}/4 & 3/4 \end{pmatrix}.\]
In this case  
\[\tau(h_{\bb{G}}) = 1/9\tau( 3I_2 + 2p(I -(tI -p) + 2r+ 2q(1-r))= 7/12.\] 

If $t=0$, then $q$ has a negative entry, so can be discarded.

If $t=1$, then again $q$ is not a projection.

If $t=2$, then $p=0$ and
\[q= \begin{pmatrix} 1/2 & 1/2\\1/2 & 1/2 \end{pmatrix}, r = \begin{pmatrix} 1/2 & -1/2 \\ - 1/2 & 1/2 \end{pmatrix}= 1-q,\]
and in this case
\[ \tau(h_G) =  1/9 \big({\rm tr}( 3I_2+ 2 r + 2 q^2)\big) = 5/9 < 7/12.\]

Finally, if $t=3$, then $p=q=r=I$ and so
\[ \tau(h_{\bb{G}}) = 1/9(3 +2(-1) +2 + 0) = 1/3.\]
So we get that the value occurs when $t=3/2$; note that we have also shown that, 
in addition, it can be achieved by matrices in $M_2$.

We turn to the quantum commuting value. 
Let $\tau : \cl A_{X,A}\to \bb{C}$ be a tracial state and $\pi_{\tau} : \cl A_{X,A}\to \cl B(H)$ be the GNS representation of $\cl A_{X,A}$, associated with $\tau$.
We let $\cl A = \pi_{\tau}(\cl A_{X,A})''$ and extend $\tau$ to a normal trace on $\cl A$ in the canonical fashion. 
Let $\cl Z$ be the centre of $\cl A$; up to a normal *-isomorphism, we have that $\cl Z = L^{\infty}(Z,\mu)$ for a suitable probability space $(Z,\mu)$. 
In the sequel, we use the terminology of \cite[Chapter 14]{kr}. 
Using \cite[Theorem 14.2.2]{kr}, we write 
$$H = \int_Z H_z d\mu(z) \ \ \mbox{ and } \ \ \cl A = \int_Z \cl A(z) d\mu(z)$$ in their corresponding direct integral decompositions. Note that $\cl A(z)\subseteq \cl B(H_z)$ is a factor for $\mu$-almost all $z\in Z$ (see \cite[Theorem 14.2.2]{kr}). For an element $T\in \cl A$, we write $T = \int_Z T(z) d\mu(z)$ for its direct integral decomposition. 

Suppressing notation, we write $p$, $q$ and $r$ for their images under $\pi_{\tau}$. By the arguments from the first part of the proof, $p + q + r \in \cl Z$; thus, there exists a measurable function $t : Z\to \bb{R}_+$, such that 
$$p(z) + q(z) + r(z) = t(z) I_{H_z}, \ \ \mbox{ for $\mu$-almost all } z\in Z.$$
We continue to use the notation from the first part of the proof. 
Thus, $C$ and $D$ are the (unique) operators in $\cl A$, determined by the requirement
$$q + r = pCp + p^{\perp} D p^{\perp},$$
while $A, B$ and $X$ are determined by 
$$q = pAp + p^{\perp} B p^{\perp} + 
pXp^{\perp} + p^{\perp} X^* p.$$
The previous arguments now imply 
\[\tau(p(z))(6t(z) - 2t(z)^2-3) = (2t(z) - t(z)^2).\]
Since the function $z\to t(z)$ is measurable (and takes finitely many values), the arguments above now show that there is a partition  
$Z = Z_{3/2}\cup Z_2 \cup Z_3$, where 
$$Z_{\alpha} = \{z\in Z : t(z) = \alpha\}.$$
Since $\mu$ is a probability measure, we have that $\tau(h_{\bb{G}})\leq 7/12$.
The proof is complete. 
\end{proof}



\begin{thebibliography}{99}



\bibitem{dpp}
{\sc K. Dykema, V. I. Paulsen and J. Prakash},
{\it Non-closure of the set of quantum correlations via graphs},
{\rm Comm. Math. Phys. 365 (2019), no. 3, 1125-1142}.


 
 
\bibitem{hmps}
{\sc J. W. Helton, K. P. Meyer, V. I. Paulsen and M. Satriano},
{\it Algebras, synchronous games and chromatic numbers of graphs},
{\rm New York J. Math. 25 (2019), 328-361}.


\bibitem{hmnpr}
{\sc J. W. Helton, H. Mousavi, S. Nezhadi, V. I. Paulsen and T. B. Russell},
{\it Synchronous values of games}, 
{\rm preprint}.



\bibitem{jnvwy} 
{\sc Zhengfeng Ji, Anand Natarajan, Thomas Vidick, John Wright, and Henry Yuen}, {\it MIP*=RE}, 
{\rm preprint,  arXiv:2001.04383 (2020)}

\bibitem{kr}
{\sc R. V. Kadison and J. R. Ringrose},
{\it Fundamentals of the theory of operator algebras II},
{\rm American Mathematical Society, 1997}.

 

 

 
 
\bibitem{kps}
{\sc S.-J. Kim, V. I. Paulsen and C. Schafhauser},
{\it A synchronous game for binary constraint systems},
{\rm J. Math. Physics 59, 032201 (2018)}.

\bibitem{krs}
{\sc S. A. Kruglyak, V. I. Rabanovich and Yu. Samo\v{i}lenko}, 
{\it On sums of projections}, 
{\rm Funktsional. Anal. i Prilozhen. 36 (2002), no. 3, 20–35, 96 (Russian, with Russian summary);
English transl., Funct. Anal. Appl. 36 (2002), no. 3, 182–195}.


\bibitem{heilbronn}
{\sc M. Lupini, L. Man\v{c}inska, D. Roberson, V. I. Paulsen, G. Scarpa, S. Severini, I. G. Todorov and A. Winter}, 
{\it Perfect strategies for non-signalling games},
{\rm Math. Phys. Anal. Geom. 23 (2020), no. 1, Paper No. 7, 31 pp.}






\bibitem{psstw}
{\sc V. I. Paulsen, S. Severini, D. Stahlke, I. G. Todorov and A. Winter},
{\it Estimating quantum chromatic numbers}, 
{\rm J. Funct. Anal. 270 (2016), 2188-2222}.

\bibitem{Pi2} 
{\sc G. Pisier}, 
{\it Tensor products of C*-algebras and operator spaces: The Connes-Kirchberg problem},
{\rm Cambridge University Press, 2020}.

\bibitem{raz}
{\sc R. Raz}, 
{\it A parallel repetition theorem},
{\rm SIAM J. Comput. 27 (1998), 763-803}.

\bibitem{russell}
{\sc T. B.  Russell}, 
{\it Geometry of the set of synchronous quantum correlations}, 
{\rm J. Math. Phys. 61 (2020), no. 5, 052201, 23 pp.}
 


\end{thebibliography}
\end{document}